\newcounter{minutes}\setcounter{minutes}{\time}
\newcounter{hours}\setcounter{hours}{\time}
\title{Some refinements of Hermite-Hadamard inequality and an open problem }
\author{Slavko Simi\'c}
\address{ Mathematical Institute SANU, Kneza Mihaila 36, 11000
Belgrade, Serbia} \email{ ssimic@turing.mi.sanu.ac.rs}
\keywords{Hermite-Hadamard integral inequality, differentiable
function, convex function} \subjclass [2010]{26D07(26D15)}
\newtheorem{theorem}[equation]{Theorem}
\newtheorem{lemma}[equation]{Lemma}
\newtheorem{corollary}[equation]{Corollary}
\newtheorem{remark}[equation]{Remark}
\newcommand{\beq}{\begin{equation}}
\newcommand{\eeq}{\end{equation}}
\numberwithin{equation}{section}
\begin{document}

\def\thefootnote{}
\footnotetext{ \texttt{\tiny File:~\jobname .tex,
          printed: \number\year-\number\month-\number\day,
          \thehours.\ifnum\theminutes<10{0}\fi\theminutes}
} \makeatletter\def\thefootnote{\@arabic\c@footnote}\makeatother

\begin{abstract}
We presented here a refinement of Hermite-Hadamard inequality as a
linear combination of its end-points. The problem of best possible
constants is closely connected with well known Simpson's rule in
numerical integration. It is solved here for a wide class of
convex functions, but not in general. Some supplementary results
are also given.
\end{abstract}

\maketitle
\section{Introduction}
A function $f: I\subset\mathbb R\to \mathbb R$ is said to be
convex on an non-empty interval $I$ if the inequality
\begin{equation}\label{eq1}
f(px+qy)\le pf(x)+qf(y)
\end{equation}
holds for all $x,y\in I$ and all non-negative $p, q; p+q=1$.

If the inequality (\ref{eq1}) reverses, then $f$ is said to be
concave on $I$. \cite{hlp}

Let $f: I\subset\mathbb R\to \mathbb R$ be a convex function on an
interval $I$ and $a,b\in I$ with $a<b$. Then
\begin{equation}\label{eq2}
f(\frac{a+b}{2})\le\frac{1}{b-a}\int_a^b
f(t)dt\le\frac{f(a)+f(b)}{2}.
\end{equation}

This double inequality is well known in the literature as
Hermite-Hadamard integral inequality for convex functions. See,
for example, \cite{np} and references therein.

If $f$ is concave, both inequalities in (\ref{eq2}) hold in the
reversed direction.

Our task in this paper is to improve the inequality (\ref{eq2}) in
a simple manner, i.e., to find some positive constants $\alpha,
\beta,\gamma,\delta$ such that the relations
\begin{equation}\label{eq3}
\gamma(f(a)+f(b))+\delta f(\frac{a+b}{2})\le\frac{1}{b-a}\int_a^b
f(t)dt\le \alpha(f(a)+f(b))+\beta f(\frac{a+b}{2}),
\end{equation}

hold for any convex $f$.

Taking $f(t)=Ct, C\in\mathbb R/\{0\}$, it can be easily seen that
both conditions
\begin{equation}\label{eq4}
2\alpha+\beta=1; 2\gamma+\delta=1,
\end{equation}

are necessary for (\ref{eq3}) to hold.

Denote

$$
M(\gamma, \delta)=M_f(a,b;\gamma,\delta):=
\gamma(f(a)+f(b))+\delta f(\frac{a+b}{2}),
$$

and

$$
N(\alpha, \beta)=N_f(a,b;\alpha,\beta):= \alpha(f(a)+f(b))+\beta
f(\frac{a+b}{2}).
$$

Since

$$
N(\alpha, \beta)=(2\alpha)(\frac{f(a)+f(b)}{2})+ \beta
f(\frac{a+b}{2})
$$
$$
\le \max\{\frac{f(a)+f(b)}{2},
f(\frac{a+b}{2})\}=\frac{f(a)+f(b)}{2},
$$

and, consequently

$$
M(\gamma, \delta)=(2\gamma)(\frac{f(a)+f(b)}{2})+ \delta
f(\frac{a+b}{2})
$$
$$
\ge \min\{\frac{f(a)+f(b)}{2},
f(\frac{a+b}{2})\}=f(\frac{a+b}{2}),
$$

it follows that the inequality (\ref{eq3}) represents a refinement
of Hermite-Hadamard inequality (\ref{eq2}).

Now, it can be seen that the bound $M(0,1)$ is best possible in
general case. Indeed, let $\gamma\in (0, 1/2]$ be fixed and the
relation

$$
M_f(0,1;\gamma, \delta)\le \int_0^1 f(t)dt
$$

holds for arbitrary convex $f$.

Then the convex function $f(t)=t^{1/\gamma}$ gives a
counter-example.

This means that the left-hand side of Hermite-Hadamard inequality
cannot be improved, in general, by the form of (\ref{eq3}).

Nevertheless, such improvement is possible for some special
classes of convex functions (see Corollary \ref{col1} below).

The case of the bound $N(\alpha,\beta)$ is significantly harder.
We found the value $N(1/4, 1/2)$ for which the right-hand side of
(\ref{eq3}) holds for any integrable convex function. Since
$N(\alpha, \beta)$ is monotone increasing in $\alpha$, because

$$
\frac{d}{d\alpha}N_f(a,b;\alpha,
\beta)=f(a)+f(b)-2f(\frac{a+b}{2})\ge 0,
$$

it follows that the right-hand side of (\ref{eq3}) also holds for
 $\alpha\in [1/4,1/2]$.

In search for best possible constants, note that

$$
\alpha\ge\frac{\frac{1}{b-a}\int_a^b
f(t)dt-f(\frac{a+b}{2})}{f(a)+f(b)-2f(\frac{a+b}{2})}:=F_f(a,b)
$$

and, if $f\in C^{\infty}(I)$ and $f''(t)>0$ for $t\in I$, then
$\lim_{b\to a}F_f(a,b)=1/6$, independently of $f$.

Therefore we obtain a potentially best possible bound $N(1/6,
2/3)$. Unfortunately, as is shown in Theorem \ref{thm3}, this
bound is best possible only for the class of differentiable convex
functions for which $f^{(4)}(t)\ge 0, t\in I$.

Hence, we can formulate the following

{\bf Open Question} {\it Find the best possible bound $N(\alpha^*,
\beta^*)$ valid for all convex mappings from the class
$C^{\infty}(I)$}.

Since the function $g(t)=\frac{4t^3\sqrt t}{35}-\frac{t^4}{12}$,
convex on $I=[0,1]$, gives the value $\alpha=.18128>1/6$, we have
that $\alpha^*\in [.18128, .25]$.

\section{Results and proofs}

We shall begin with the basic contribution to the problem defined
above.

\begin{theorem}\label{thm1} Let $f: I\subset\mathbb R\to \mathbb R$ be a convex function on an
interval $I$ and $a,b\in I$ with $a<b$. Then
$$
\frac{1}{b-a}\int_a^b
f(t)dt\le\frac{1}{4}(f(a)+f(b))+\frac{1}{2}f(\frac{a+b}{2}):=N(1/4,1/2).
$$
\end{theorem}

\begin{proof} We shall derive the proof by Hermite-Hadamard
inequality itself. Indeed, applying twice the right part of this
inequality, we get

$$
\frac{2}{b-a}\int_a^{\frac{a+b}{2}}
f(t)dt\le\frac{1}{2}(f(a)+f(\frac{a+b}{2})),
$$

and

$$
\frac{2}{b-a}\int_{\frac{a+b}{2}}^b
f(t)dt\le\frac{1}{2}(f(\frac{a+b}{2})+f(b)).
$$

Summing, the result appears. Therefore, HH inequality has a
self-improving property.
\end{proof}

For the sake of further refinements, we shall consider in the
sequel functions from the class $C^{(4)}(I)$ i.e., functions which
are continuously differentiable up to fourth order on an interval
$I\subset\mathbb R$.

We give firstly a sharp improvement of the result from Theorem
\ref{thm1}.

\begin{theorem}\label{thm2} Let $f\in C^{(4)}(I)$ be convex on $I$ together
with its second derivative. Then for each $a,b\in I, a<b$,

$$
\frac{(b-a)^2}{48}f''(\frac{a+b}{2})\le
N(1/4,1/2)-\frac{1}{b-a}\int_a^b f(t)dt\le
\frac{(b-a)^2}{96}[f''(a)+f''(b)].
$$

 If $f$ is convex and $f''$ concave on $I$, then

 $$
\frac{(b-a)^2}{96}[f''(a)+f''(b)]\le
N(1/4,1/2)-\frac{1}{b-a}\int_a^b f(t)dt\le
\frac{(b-a)^2}{48}f''(\frac{a+b}{2}).
 $$
\end{theorem}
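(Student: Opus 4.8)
The plan is to read the central quantity as a quadrature-error functional. Writing
$$L[f] := \frac{b-a}{4}\bigl(f(a)+f(b)\bigr) + \frac{b-a}{2}f\!\left(\tfrac{a+b}{2}\right) - \int_a^b f(t)\,dt = (b-a)\left[N(1/4,1/2) - \frac{1}{b-a}\int_a^b f(t)\,dt\right],$$
everything to be proved is a two-sided estimate for $L[f]$. A direct check (equivalently, the normalization $2\cdot\frac14+\frac12=1$ together with the affine test $f(t)=Ct$ from the introduction) shows that $L$ annihilates every affine function, so the natural device is the second-order Peano kernel representation
$$L[f] = \int_a^b K(t)\,f''(t)\,dt, \qquad K(t) = L_x\bigl[(x-t)_+\bigr],$$
where $L_x$ denotes $L$ acting in the variable $x$ and $(x-t)_+=\max\{x-t,0\}$. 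I would obtain this either by quoting the Peano kernel theorem or, self-containedly, by integrating by parts twice.

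Next I would compute $K$ explicitly. Using $\int_a^b (x-t)_+\,dx=\tfrac{(b-t)^2}{2}$ and evaluating the three point masses, one finds, with the evident symmetry $t\mapsto a+b-t$,
$$K(t) = (t-a)\!\left[\frac{b-a}{4} - \frac{t-a}{2}\right] \text{ on } \Bigl[a,\tfrac{a+b}{2}\Bigr], \qquad K(t) = (b-t)\!\left[\frac{b-a}{4} - \frac{b-t}{2}\right] \text{ on } \Bigl[\tfrac{a+b}{2},b\Bigr].$$
The three facts I need are: $K\ge 0$ on $[a,b]$ (each factor is non-negative there), $K$ is symmetric about $\tfrac{a+b}{2}$, and $\int_a^b K(t)\,dt = \tfrac{(b-a)^3}{48}$ (a one-line integration, cross-checked against $L[t^2/2]=\tfrac{(b-a)^3}{48}$). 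Symmetry forces the $K$-weighted barycenter of $t$ to be $\tfrac{a+b}{2}$ and splits the first moment evenly, so that $\int_a^b K(t)\tfrac{b-t}{b-a}\,dt = \int_a^b K(t)\tfrac{t-a}{b-a}\,dt = \tfrac{(b-a)^3}{96}$.

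With these in hand the two bounds follow from the convexity of $f''$. For the lower bound I apply Jensen's inequality to the convex function $f''$ against the probability measure $K(t)\,dt/\int_a^b K$; since the barycenter is $\tfrac{a+b}{2}$,
$$\frac{L[f]}{\int_a^b K(t)\,dt} \ge f''\!\left(\frac{a+b}{2}\right) \quad\Longrightarrow\quad L[f] \ge \frac{(b-a)^3}{48}\, f''\!\left(\frac{a+b}{2}\right).$$
For the upper bound I use the chord estimate $f''(t)\le \tfrac{b-t}{b-a}f''(a)+\tfrac{t-a}{b-a}f''(b)$, valid since $f''$ is convex, integrate it against $K$, and invoke the moment identities to get $L[f]\le \tfrac{(b-a)^3}{96}[f''(a)+f''(b)]$. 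Dividing by $b-a$ reproduces the first displayed chain. When $f''$ is concave instead, both Jensen and the chord estimate reverse, which interchanges the roles of the two outer quantities and yields the second chain verbatim.

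The only genuinely delicate step is establishing the Peano representation together with the sign and symmetry of $K$; everything afterward is Jensen plus a linear chord bound. I expect the non-negativity of $K$ to be the pivotal point, as it is precisely what makes $L$ a positive functional (consistent with Theorem \ref{thm1}) and what lets the convexity of $f''$ govern the error from both sides. Note that the convexity of $f$ itself enters only to guarantee $f''\ge 0$, hence that the lower bound $\tfrac{(b-a)^2}{48}f''(\tfrac{a+b}{2})$ is itself non-negative.
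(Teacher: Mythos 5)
Your proof is correct, and it is the paper's proof in different clothing: the substance is identical, but the machinery you invoke is the standard quadrature-theoretic one rather than the paper's bespoke lemmas. The paper proves an identity (its Lemma \ref{l2}, by double integration by parts) expressing $N(1/4,1/2)-\frac{1}{b-a}\int_a^b f(t)\,dt$ as $\frac{(b-a)^2}{16}\int_0^1 t(1-t)[f''(x_t)+f''(y_t)]\,dt$, where $x_t,y_t$ are points symmetric about $\frac{a+b}{2}$; your Peano kernel $K$ is exactly this identity ``unfolded'': substituting $u=y_t$ turns the weight $t(1-t)$ into your $\frac{(u-a)(a+b-2u)}{4}$ on $[a,\frac{a+b}{2}]$, extended symmetrically. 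Likewise, the paper's key Lemma \ref{l1} --- $2h(\frac{a+b}{2})\le h(x)+h(y)\le h(a)+h(b)$ for convex $h$ and $x+y=a+b$ --- applied pointwise under the integral sign is precisely your Jensen step (its left half, at two symmetric points) plus your chord estimate (its right half: bound $h(x)$ and $h(y)$ by the chord and sum). What the two packagings buy: the paper's symmetric pairing makes the constants fall out of the single computation $\int_0^1 t(1-t)\,dt=\frac16$ and keeps the argument self-contained, while your route is more systematic --- the Peano representation, the sign and symmetry of $K$, and the two moment identities would apply verbatim to any quadrature rule exact on affine functions (e.g.\ to Theorem \ref{thm4}, where the paper must handle a sign-changing weight $t(2-3t)$ by splitting the integral, a complication your nonnegative-kernel framework flags immediately). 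Your closing observation that convexity of $f$ itself is only needed to make the lower bound nonnegative matches the paper's Remark \ref{rem1}.
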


\begin{proof} We need the following two assertions.
\begin{lemma}\label{l1} \cite{s} If $h$ is convex on $I=[a,b]$ and, for $x,y\in I,
x+y=a+b$, then

$$
2h(\frac{a+b}{2})\le h(x)+h(y)\le h(a)+h(b).
$$
\end{lemma}

\begin{remark} Note that this result is a pre-HH inequality, i.e., HH
inequality is its direct consequence. Indeed, let $x=pa+qb,
y=qa+pb$ for $p,q\ge 0, p+q=1$. Then $x,y\in I$ and $x+y=a+b$.
Hence,
$$
2h(\frac{a+b}{2})\le h(pa+qb)+h(qa+pb)\le h(a)+h(b).
$$

Integrating this expression over $p\in [0,1]$ we obtain the HH
inequality.
\end{remark}

\begin{lemma}\label{l2} Let $f\in C^{(4)}(I)$ and $a,b\in I, a<b$.Then the following identity holds.
$$
N(1/4,1/2)-\frac{1}{b-a}\int_a^b f(t)dt=
\frac{(b-a)^2}{16}\int_0^1 t(1-t)[f''(x)+f''(y)]dt,
$$

with $x:=a\frac{t}{2}+b(1-\frac{t}{2}),
y:=b\frac{t}{2}+a(1-\frac{t}{2})$.
\end{lemma}

It is not difficult to prove this identity  by double partial
integration of its right-hand side.

Since $x+y=a+b$ and $f''$ is convex/concave, applying Lemma
\ref{l1} the proof readily follows.

\end{proof}

Another improvement of HH inequality is given in the next

\begin{theorem} \label{thm3} Let $f\in C^{(4)}(I)$ and $a,b\in I,
a<b$. If $f, f''$ are convex on $I$, then

$$
\frac{1}{b-a}\int_a^b f(t)dt\le N(1/6,2/3),
$$

and the coefficients $1/6, 2/3$ are best possible for this class
of functions.

If $f''$ is concave on $I$ then the reversed inequality takes
place.
\end{theorem}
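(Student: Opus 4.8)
The plan is to reduce the claim to the two identities already available and then to a single sign analysis. The starting observation is that the proposed bound is nothing but Simpson's quadrature rule,
$$
N(1/6,2/3)=\frac{1}{b-a}\cdot\frac{b-a}{6}\Big(f(a)+4f(\frac{a+b}{2})+f(b)\Big),
$$
so the assertion is exactly that Simpson's rule overestimates the mean of $f$ precisely when $f''$ is convex (equivalently $f^{(4)}\ge 0$). To exploit the machinery of Theorem \ref{thm2}, I would first record the elementary linear relation
$$
N(1/6,2/3)=N(1/4,1/2)-\frac{1}{12}\Big(f(a)+f(b)-2f(\frac{a+b}{2})\Big),
$$
obtained simply by comparing coefficients.

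Next I would represent both pieces on the right as integrals of $f''$ in the \emph{same} variables $x,y$ used in Lemma \ref{l2}. For the first piece this is Lemma \ref{l2} verbatim. For the second I would establish, by partial integration and the substitution $t\mapsto x,y$ (exactly as in Lemma \ref{l2}), the companion identity
$$
f(a)+f(b)-2f(\frac{a+b}{2})=\frac{(b-a)^2}{4}\int_0^1 t\,[f''(x)+f''(y)]\,dt.
$$
Substituting both into the relation above and collecting integrands (using $3t(1-t)-t=t(2-3t)$) collapses everything to the single identity
$$
N(1/6,2/3)-\frac{1}{b-a}\int_a^b f(t)\,dt=\frac{(b-a)^2}{48}\int_0^1 t(2-3t)\,[f''(x)+f''(y)]\,dt.
$$

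The crux, and the point where convexity of $f''$ finally enters, is the sign of this last integral. Write $\kappa(t)=t(2-3t)$ and $\phi(t)=f''(x)+f''(y)$. The kernel $\kappa$ has \emph{zero mean}, $\int_0^1\kappa(t)\,dt=0$, and changes sign once, being nonnegative on $[0,2/3]$ and nonpositive on $[2/3,1]$. Since $x+y=a+b$ is constant while the spread $x-y=(b-a)(1-t)$ shrinks as $t$ grows, Lemma \ref{l1} applied to the convex function $f''$ (on nested subintervals with common midpoint) shows that $\phi$ is monotone decreasing in $t$. Subtracting the constant $\phi(2/3)$, which is legitimate because $\kappa$ has zero mean, gives $\int_0^1\kappa(t)\big(\phi(t)-\phi(2/3)\big)\,dt$, whose integrand is a product of two factors of the same sign on each of $[0,2/3]$ and $[2/3,1]$; hence the integral is $\ge 0$, which is the desired inequality. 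If $f''$ is concave the monotonicity of $\phi$ reverses and the identical argument yields the opposite inequality. I expect this sign/monotonicity step, rather than the (routine) derivation of the identity, to be the main obstacle.

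Finally, for optimality I would invoke the computation already recorded in the introduction: any admissible $\alpha$ must satisfy $\alpha\ge F_f(a,b)$, and for any $f$ in the class with $f''$ strictly convex one has $\lim_{b\to a}F_f(a,b)=1/6$. Letting $b\to a$ forces $\alpha\ge 1/6$, so the coefficients $1/6$ and $\beta=2/3$ cannot be improved within this class, which establishes the best-possibility assertion.
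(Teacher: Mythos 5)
Your proof is correct, but it follows a genuinely different route from the paper's. The paper disposes of Theorem \ref{thm3} in two lines by quoting Simpson's rule with remainder (Lemma \ref{l3}, cited from \cite{u}): taking $x_1=a$, $x_2=\frac{a+b}{2}$, $x_3=b$, $h=\frac{b-a}{2}$, the quadrature error is $-\frac{1}{90}h^5f^{(4)}(\xi)$, and for $f\in C^{(4)}$ convexity (resp.\ concavity) of $f''$ gives $f^{(4)}\ge 0$ (resp.\ $\le 0$), so the sign is immediate; optimality of $1/6$ is, exactly as in your last step, inherited from the limit $F_f(a,b)\to 1/6$ computed in the introduction. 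You instead bypass the quadrature error formula entirely: from Lemma \ref{l2}, the coefficient identity $N(1/6,2/3)=N(1/4,1/2)-\frac{1}{12}\bigl(f(a)+f(b)-2f(\frac{a+b}{2})\bigr)$, and your (correct, verifiable by integration by parts) companion identity $f(a)+f(b)-2f(\frac{a+b}{2})=\frac{(b-a)^2}{4}\int_0^1 t[f''(x)+f''(y)]\,dt$, you assemble precisely the paper's Lemma \ref{l4} --- which the paper states without proof inside Theorem \ref{thm4} --- and then settle its sign by a Chebyshev-type pairing: the kernel $t(2-3t)$ has zero mean and a single sign change at $t=2/3$, while $\phi(t)=f''(x)+f''(y)$ is monotone by Lemma \ref{l1} applied to the nested intervals $[y,x]$ with common midpoint, so the integrand of $\int_0^1 t(2-3t)\bigl(\phi(t)-\phi(2/3)\bigr)dt$ is pointwise nonnegative (resp.\ nonpositive). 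All steps check. What each approach buys: the paper's is shorter but imports the Simpson remainder as a black box; yours is self-contained within the paper's own machinery (Lemmas \ref{l1} and \ref{l2}), supplies the missing proof of Lemma \ref{l4} as a by-product, and your zero-mean/monotonicity argument even differs from the paper's own later use of Lemma \ref{l4} in Theorem \ref{thm4}, where the integral is instead split at $t=2/3$ and Lemma \ref{l1} is applied to each piece separately. One cosmetic remark: in the optimality step the hypothesis that makes the introduction's limit valid is $f''>0$ (e.g.\ $f=e^t$, which lies in the class), rather than strict convexity of $f''$; this does not affect the conclusion.
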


\begin{proof} Note that the coefficients $1/6$ and $2/3$ are
involved in well-known Simpson's rule which is of importance in
numerical integration. It says that

\begin{lemma} \label{l3} \cite{u} For an integrable $f$, we have

$$
\int_{x_1}^{x_3} f(t)
dt=\frac{1}{3}h(f_1+4f_2+f_3)-\frac{1}{90}h^5 f^{(4)}(\xi),
 (x_1<\xi<x_3),
$$

where $h:=x_2-x_1=x_3-x_2$.
\end{lemma}

Now, taking $x_1=a, x_2=(a+b)/2, x_3=b$, we get $h=(b-a)/2$. Also,
convexity/concavity of $f''$ on $I$ implies that
$f^{(4)}(\xi)\gtrless 0$ and the proof follows.
\end{proof}

Combining the second part of this theorem with the result of
Theorem \ref{thm1}, we get

\begin{corollary}\label{col1} For $f\in C^{(4)}(I)$ let $f$ be convex
and $f''$ concave on $I$. Then

$$
N(1/6,2/3)\le \frac{1}{b-a}\int_a^b f(t)dt\le N(1/4,1/2),
$$

which gives a proper answer, regarding this class of functions, to
the problem posted in Introduction, .
\end{corollary}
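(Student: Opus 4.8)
The plan is to obtain the corollary as an immediate consequence of the two preceding theorems, since the hypotheses ``$f$ convex and $f''$ concave on $I$'' are exactly what is needed to invoke the favorable direction of each result. No fresh computation is required: the whole task reduces to checking that, under these two hypotheses, Theorem \ref{thm1} delivers the upper bound while the reversed (concave) clause of Theorem \ref{thm3} delivers the lower bound, and then chaining the two.

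First I would establish the right-hand inequality. Since $f$ is convex on $I$, Theorem \ref{thm1} applies verbatim and gives
$$
\frac{1}{b-a}\int_a^b f(t)\,dt\le \frac14\bigl(f(a)+f(b)\bigr)+\frac12 f\Bigl(\tfrac{a+b}{2}\Bigr)=N(1/4,1/2).
$$
Note that this step uses only the convexity of $f$; the concavity of $f''$ is irrelevant here, so this upper bound in fact holds for the wider class of all convex functions.

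Next I would establish the left-hand inequality. Because $f\in C^{(4)}(I)$ and $f''$ is concave on $I$, we are precisely in the situation governed by the final clause of Theorem \ref{thm3}: when $f''$ is concave the inequality there reverses, yielding
$$
N(1/6,2/3)\le \frac{1}{b-a}\int_a^b f(t)\,dt.
$$
Combining the two displays produces $N(1/6,2/3)\le \frac{1}{b-a}\int_a^b f(t)\,dt\le N(1/4,1/2)$, which is the assertion.

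The argument presents no genuine obstacle: both inequalities are already proven, so the corollary is essentially a statement about the compatibility of the two hypotheses and the correct reading of the inequality directions. The only thing worth noting is that the class in question is non-empty and non-trivial — for instance $f(t)=t^2$, whose second derivative is constant and hence both convex and concave, already qualifies — and that the enclosing constants are genuinely distinct, since $1/6<1/4$ and $2/3>1/2$. Thus the corollary does not degenerate but actually traps the mean value of $f$ between two nearby refined end-point bounds, which is the intended ``proper answer'' to the Open Question for this restricted class.
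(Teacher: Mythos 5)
Your proof is correct and follows exactly the paper's own route: the paper likewise obtains the corollary by combining Theorem \ref{thm1} (upper bound from convexity of $f$) with the reversed-inequality clause of Theorem \ref{thm3} (lower bound from concavity of $f''$). Nothing further is needed.
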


Further refinement of the assertion from Theorem \ref{thm3} is
possible.

\begin{theorem} \label{thm4} For $f\in C^{(4)}(I)$ let $f$
and $f''$ be convex on $I$. Then

$$
0\le \frac{1}{6}[f(a)+f(b)]+\frac{2}{3}
f(\frac{a+b}{2})-\frac{1}{b-a}\int_a^b f(t)dt
$$
$$
\le\frac{(b-a)^2}{324} [f''(a)+f''(b)-2f''(\frac{a+b}{2})].
$$

If $f$ is convex and $f''$ concave on $I$, then

$$
0\le \frac{1}{b-a}\int_a^b f(t)dt-\frac{1}{6}[f(a)+f(b)+ 4
f(\frac{a+b}{2})]
$$
$$
\le \frac{(b-a)^2}{324} [2f''(\frac{a+b}{2})-(f''(a)+f''(b))].
$$
\end{theorem}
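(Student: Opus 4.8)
The lower bound $0\le\tfrac16[f(a)+f(b)]+\tfrac23 f(\tfrac{a+b}{2})-\tfrac1{b-a}\int_a^b f\,dt$ is precisely the inequality of Theorem \ref{thm3}, so only the upper bound needs a new argument; the concave case will then follow by replacing $f$ with $-f$ (for which both $-f$ and $(-f)''$ are convex), which reverses the whole chain. The plan for the upper bound is to reduce everything to the identity of Lemma \ref{l2} together with the elementary two-sided estimate of Lemma \ref{l1}.

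First I would record the purely algebraic identity
$$
N(1/6,2/3)-\frac1{b-a}\int_a^b f\,dt=\Big(N(1/4,1/2)-\frac1{b-a}\int_a^b f\,dt\Big)-\frac1{12}\big[f(a)+f(b)-2f(\tfrac{a+b}{2})\big],
$$
which comes from comparing the coefficients $1/6,2/3$ with $1/4,1/2$. The first bracket is handled by Lemma \ref{l2}, and the second bracket admits a companion representation $f(a)+f(b)-2f(\tfrac{a+b}{2})=\tfrac{(b-a)^2}{4}\int_0^1 t\,[f''(x)+f''(y)]\,dt$ with the very same $x,y$ as in Lemma \ref{l2} (prove it by the same double integration by parts, or by computing the second-order Peano kernel $\min(s-a,b-s)$ of this functional). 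Writing $\Phi(t):=f''(x)+f''(y)$ and combining the two representations, the weights $\tfrac1{16}t(1-t)$ and $\tfrac1{12}\cdot\tfrac14 t$ collapse, since $3t(1-t)-t=t(2-3t)$, to give
$$
N(1/6,2/3)-\frac1{b-a}\int_a^b f\,dt=\frac{(b-a)^2}{48}\int_0^1 t(2-3t)\,\Phi(t)\,dt.
$$

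The final step exploits two features of this identity. Since $x+y=a+b$, Lemma \ref{l1} applied to the convex function $f''$ gives $2f''(\tfrac{a+b}{2})\le\Phi(t)\le f''(a)+f''(b)$ for every $t$; hence $\Psi(t):=\Phi(t)-2f''(\tfrac{a+b}{2})$ satisfies $0\le\Psi(t)\le D$, where $D:=f''(a)+f''(b)-2f''(\tfrac{a+b}{2})$. Because $\int_0^1 t(2-3t)\,dt=0$, the constant $2f''(\tfrac{a+b}{2})$ may be subtracted without changing the value, so the error equals $\tfrac{(b-a)^2}{48}\int_0^1 t(2-3t)\,\Psi(t)\,dt$. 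The weight $t(2-3t)$ is nonnegative on $[0,2/3]$ and nonpositive on $[2/3,1]$; discarding the nonpositive contribution of $[2/3,1]$ and using $\Psi\le D$ where the weight is nonnegative yields
$$
\int_0^1 t(2-3t)\,\Psi(t)\,dt\le D\int_0^{2/3}t(2-3t)\,dt=\frac{4}{27}\,D,
$$
and multiplying by $\tfrac{(b-a)^2}{48}$ produces exactly the claimed bound $\tfrac{(b-a)^2}{324}D$.

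The only real obstacle is the bookkeeping: getting the companion identity for $f(a)+f(b)-2f(\tfrac{a+b}{2})$ right and checking that the two weights really combine into $t(2-3t)$ with vanishing mean. Once that is in place, the change of sign of $t(2-3t)$ at $t=2/3$ and the two-sided bound $0\le\Psi\le D$ from Lemma \ref{l1} do all the work, and the constant $324=48\cdot\tfrac{27}{4}$ appears automatically. I would note that this estimate is deliberately crude (it throws away the whole interval $[2/3,1]$), which is why the resulting coefficient, though valid, need not be the sharpest one attainable from the same identity.
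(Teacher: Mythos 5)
Your proof is correct and is in substance the same as the paper's: the heart of both arguments is the identity
$N(1/6,2/3)-\frac{1}{b-a}\int_a^b f(t)\,dt=\frac{(b-a)^2}{48}\int_0^1 t(2-3t)[f''(x)+f''(y)]\,dt$
(the paper's Lemma \ref{l4}), followed by Lemma \ref{l1} applied across the sign change of the weight at $t=2/3$, with $\int_0^{2/3}t(2-3t)\,dt=\int_{2/3}^1 t(3t-2)\,dt=\frac{4}{27}$ producing the constant $324=48\cdot\frac{27}{4}$. The differences are organizational rather than mathematical. Where the paper asserts Lemma \ref{l4} outright (to be verified by double integration by parts, as with Lemma \ref{l2}), you rederive it by combining Lemma \ref{l2} with the companion identity $f(a)+f(b)-2f(\frac{a+b}{2})=\frac{(b-a)^2}{4}\int_0^1 t\,[f''(x)+f''(y)]\,dt$ and the algebra $3t(1-t)-t=t(2-3t)$; both identities check out, and your route has the merit of reusing the kernel already established for Theorem \ref{thm2}. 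Your endgame (subtracting the constant $2f''(\frac{a+b}{2})$, permissible because $\int_0^1 t(2-3t)\,dt=0$, then discarding the nonpositive contribution of $[2/3,1]$ and bounding $\Psi\le D$ on $[0,2/3]$) is arithmetically identical to the paper's two-piece estimate, which uses the upper bound of Lemma \ref{l1} on $[0,2/3]$ and the lower bound on $[2/3,1]$. One slip to fix: for the concave case you assert that $-f$ and $(-f)''$ are both convex, but $-f$ is concave when $f$ is convex, so the hypotheses of the first part are not literally satisfied by $-f$. The reduction is nevertheless valid, because the convexity of $f$ itself is never used anywhere in the argument --- only that of $f''$ enters, via Lemma \ref{l1} --- a point the paper itself makes in Remark \ref{rem1}; you should say this explicitly rather than rely on the false parenthetical.
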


The above theorem tightly refines Simpson's rule for this class of
functions.

\begin{proof} The left part is proved in Theorem \ref{thm3}. For
the right part we shall use an integral identity

\begin{lemma}\label{l4}
$$
N(1/6,2/3)- \frac{1}{b-a}\int_a^b
f(t)dt=\frac{(b-a)^2}{48}\int_0^1 t(2-3t)[f''(x)+f''(y)]dt,
$$

where $x$ and $y$ are the same as in Lemma \ref{l2}.
\end{lemma}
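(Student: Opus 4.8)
The plan is to verify the identity directly, by evaluating the right-hand integral through two successive integrations by parts in $t$, exactly in the spirit of the remark that Lemma \ref{l2} is proved ``by double partial integration.'' Write the weight as $p(t):=t(2-3t)=2t-3t^2$ and split the integrand, so that the right-hand side is $\frac{(b-a)^2}{48}(I_x+I_y)$ with $I_x:=\int_0^1 p(t)f''(x)\,dt$ and $I_y:=\int_0^1 p(t)f''(y)\,dt$. Here $x(t)=b-\frac{t}{2}(b-a)$ and $y(t)=a+\frac{t}{2}(b-a)$, so $x'(t)=-\frac{b-a}{2}$ and $y'(t)=\frac{b-a}{2}$; by the chain rule $\frac{d}{dt}f'(x)=-\frac{b-a}{2}f''(x)$ and $\frac{d}{dt}f'(y)=\frac{b-a}{2}f''(y)$. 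Hence $f''(x)$ and $f''(y)$ possess the explicit $t$-antiderivatives $-\frac{2}{b-a}f'(x)$ and $\frac{2}{b-a}f'(y)$, which is precisely what makes integration by parts in $t$ effective.

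First I would integrate each of $I_x,I_y$ by parts once, differentiating $p$ and antidifferentiating $f''$. Because $p(0)=0$, the lower endpoint $t=0$ (where $x=b$, $y=a$) contributes nothing at this stage, while the upper endpoint $t=1$ (where $x=y=\frac{a+b}{2}$), together with $p(1)=-1$, produces the terms $+\frac{2}{b-a}f'(\frac{a+b}{2})$ in $I_x$ and $-\frac{2}{b-a}f'(\frac{a+b}{2})$ in $I_y$. The crucial observation is that these stray first-order terms cancel in the sum $I_x+I_y$; this is the one place where the computation could go astray, and it works exactly because $x$ and $y$ traverse the interval with opposite orientation. What remains is $\pm\frac{2}{b-a}\int_0^1 p'(t)f'(x)\,dt$ (respectively in $y$), to which I would apply a second integration by parts, now differentiating $p'(t)=2-6t$ (so that $p''(t)=-6$ is constant) and antidifferentiating $f'$. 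Using $p'(0)=2$ and $p'(1)=-4$, the boundary terms now yield $f$ evaluated at the endpoints and the midpoint, while the leftover integrals $\int_0^1 f(x)\,dt$ and $\int_0^1 f(y)\,dt$ become, after the substitutions $u=x(t)$ and $u=y(t)$, the half-interval integrals $\frac{2}{b-a}\int_{(a+b)/2}^{b}f$ and $\frac{2}{b-a}\int_{a}^{(a+b)/2}f$, which add up to $\frac{2}{b-a}\int_a^b f$.

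Collecting all surviving terms, I expect to arrive at
$$I_x+I_y=\frac{32}{(b-a)^2}f\!\left(\frac{a+b}{2}\right)+\frac{8}{(b-a)^2}\bigl(f(a)+f(b)\bigr)-\frac{48}{(b-a)^3}\int_a^b f(u)\,du.$$
Multiplying by $\frac{(b-a)^2}{48}$ turns the three coefficients into $\frac{2}{3}$, $\frac{1}{6}$ and $-\frac{1}{b-a}$ respectively, giving exactly $\frac{1}{6}(f(a)+f(b))+\frac{2}{3}f(\frac{a+b}{2})-\frac{1}{b-a}\int_a^b f=N(1/6,2/3)-\frac{1}{b-a}\int_a^b f$, as claimed.

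The argument is, in the end, routine integration by parts; the only genuine obstacle is the bookkeeping of the boundary contributions, and in particular the cancellation of the $f'(\frac{a+b}{2})$ terms, for which the antisymmetric roles of $x$ and $y$ (inherited from Lemma \ref{l2}) are essential, together with a careful tracking of the orientation signs in the final change of variables. As an alternative check one could differentiate both sides with respect to $b$, but the twofold integration by parts is cleaner and directly parallels the proof of Lemma \ref{l2}.
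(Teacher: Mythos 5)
Your proof is correct: the double integration by parts, the cancellation of the $f'\bigl(\tfrac{a+b}{2}\bigr)$ boundary terms, the final identity for $I_x+I_y$, and the normalization by $\tfrac{(b-a)^2}{48}$ all check out. This is exactly the route the paper intends (it proves the companion identity of Lemma \ref{l2} ``by double partial integration'' and leaves Lemma \ref{l4} to the same technique), so your argument simply fills in the details the paper omits.
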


Writing,

$$
\int_0^1 t(2-3t)[\cdot]dt = \int_0^{2/3}
t(2-3t)[\cdot]dt-\int_{2/3}^1 t(3t-2)[\cdot]dt,
$$

and applying Lemma \ref{l1} to each integral separately, the
result appears since

$$
\int_0^{2/3} t(2-3t)dt=\int_{2/3}^1 t(3t-2)dt=\frac{4}{27}.
$$

\end{proof}

\begin{remark}\label{rem1} Note that the convexity condition on
$f$ in last three theorems is superfluous. It is stated there just
to keep the connection with Hermite-Hadamard inequality.
\end{remark}

\section{Applications in Means theory}

A {\em mean} is a map $M: \mathbb R_+\times\mathbb R_+\to \mathbb
R_+$, with a property

$$
\min\{a,b\}\le M(a,b)\le\max\{a,b\},
$$

for each $a,b\in \mathbb R_+$.

Hence $M$ is necessary reflexive, $M(a,a)=a$.

Most known ordered family of means is the following family
$\Delta$ of elementary means,

$$
\Delta: \  H\le G \le L\le I\le A\le S,
$$
where
$$
H=H(a, b):=2(1/a+1/b)^{-1}; \ \ G=G(a, b):=\sqrt {ab}; \ \ L=L(a,
b):=\frac{b-a}{\log b-\log a};
$$
$$
 I=I(a, b):=\frac{1}{e}(b^b/a^a)^{1/(b-a)}; \ \ A=A(a,
 b):=\frac{a+b}{2};
 S=S(a,b):=a^{\frac{a}{a+b}}b^{\frac{b}{a+b}},
$$
are the harmonic, geometric, logarithmic, identric, arithmetic
 and Gini mean, respectively.

 As an illustration of our results, we shall give in the sequel
 some sharp approximations of logarithmic and identric means.

 \begin{theorem}\label{thm5} The inequality $G\le L\le A$ can be
 improved to

 $$
 \frac{1}{3}(A+2G)-\frac{2}{81}(\frac{A-G}{L})^2(A+G)\le
 L\le\frac{1}{3}(A+2G).
 $$

 Similarly, an approximation of $1/L$ in terms of the arithmetic
 and harmonic means is given by

 $$
 \frac{A-H}{6A^2}\le\frac{1}{2}(\frac{1}{A}+\frac{1}{H})-\frac{1}{L}
 \le\frac{A(A-H)}{6H^2}(\frac{4}{H}-\frac{3}{A}).
 $$

 \end{theorem}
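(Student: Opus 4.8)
The plan is to obtain each of the two chains from a single well-chosen test function, after which everything reduces to the theorems already established. The bounds on $L$ will come from the exponential function, and the estimate for $1/L$ from $f(t)=1/t$; in both cases the function and its second derivative are convex, so Theorems \ref{thm2}, \ref{thm3} and \ref{thm4} apply without further justification (note that for $f(t)=1/t$ one has $f''(t)=2/t^3$ and $f^{(4)}(t)=24/t^5>0$, so $f''$ is indeed convex).

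First I would treat the bounds on $L$. Put $f(t)=e^t$ and integrate over $[\alpha,\beta]:=[\log a,\log b]$. Then $\frac{1}{\beta-\alpha}\int_\alpha^\beta e^t\,dt=\frac{b-a}{\log b-\log a}=L$, while $f(\alpha)+f(\beta)=a+b=2A$ and $f(\frac{\alpha+\beta}{2})=e^{(\log a+\log b)/2}=\sqrt{ab}=G$. Hence $N(1/6,2/3)=\frac16(2A)+\frac23 G=\frac13(A+2G)$, and since $f''=f=e^t$ is convex, Theorem \ref{thm3} yields at once the upper bound $L\le\frac13(A+2G)$. For the lower bound I would apply the right-hand inequality of Theorem \ref{thm4}: since $f''(\alpha)+f''(\beta)-2f''(\frac{\alpha+\beta}{2})=2A-2G$, it gives $\frac13(A+2G)-L\le\frac{(\log b-\log a)^2}{324}\cdot 2(A-G)$. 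It then remains to translate $\log b-\log a$ back into elementary means via $\log b-\log a=(b-a)/L$ together with $(b-a)^2=(a+b)^2-4ab=4(A-G)(A+G)$; substituting these gives exactly $\frac{2}{81}(\frac{A-G}{L})^2(A+G)$, the claimed lower bound.

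For the second chain I would take $f(t)=1/t$ on $[a,b]$, so that $\frac{1}{b-a}\int_a^b f\,dt=\frac{\log b-\log a}{b-a}=1/L$, and Theorem \ref{thm2} applies. Computing the relevant quantities, $f(a)+f(b)=\frac1a+\frac1b=\frac{2}{H}$ and $f(\frac{a+b}{2})=\frac1A$, so $N(1/4,1/2)=\frac12(\frac1A+\frac1H)$; moreover $f''(\frac{a+b}{2})=2/A^3$ and $f''(a)+f''(b)=2/a^3+2/b^3$. Theorem \ref{thm2} then reads $\frac{(b-a)^2}{48}\cdot\frac{2}{A^3}\le\frac12(\frac1A+\frac1H)-\frac1L\le\frac{(b-a)^2}{96}(\frac{2}{a^3}+\frac{2}{b^3})$. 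The final simplification uses $G^2=ab=AH$, whence $(b-a)^2=4(A^2-G^2)=4A(A-H)$; the lower bound collapses to $\frac{A-H}{6A^2}$, and for the upper bound one further uses $a^3+b^3=(a+b)^3-3ab(a+b)=2A^2(4A-3H)$ and $a^3b^3=A^3H^3$ to reach $\frac{A(A-H)}{6H^2}(\frac4H-\frac3A)$.

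I do not expect any genuine obstacle: once the two test functions are identified, the convexity hypotheses of the quoted theorems are immediate, and the remaining work is entirely the algebraic reduction of the resulting bounds to the six elementary means. The only points demanding care are the change of variable that turns the exponential integral into $L$, and the systematic use of the three identities $(b-a)^2=4(A-G)(A+G)$, $G^2=AH$, and $\log b-\log a=(b-a)/L$ to put everything into closed form.
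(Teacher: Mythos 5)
Your proposal is correct and follows essentially the same route as the paper: the first chain is obtained by applying Theorem \ref{thm4} (whose left-hand part is exactly Theorem \ref{thm3}) to $f(t)=e^t$ on $[\log a,\log b]$ and translating via $\log b-\log a=(b-a)/L$ and $(b-a)^2=4(A-G)(A+G)$, while the second chain comes from Theorem \ref{thm2} applied to $f(t)=1/t$ together with the identities $1/a+1/b=2/H$, $G^2=AH$, $(b-a)^2=4A(A-H)$. Your algebraic reductions (including $a^3+b^3=2A^2(4A-3H)$ and $a^3b^3=A^3H^3$) are exactly the simplifications the paper invokes, and you even supply the convexity check on $f''=2/t^3$ that the paper leaves implicit.
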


 \begin{proof} Applying Theorem \ref{thm4} with $f=e^t$, we obtain

 $$
 0\le \frac{1}{6}(e^x+e^y)+\frac{2}{3}e^{\frac{x+y}{2}}-\frac{e^x-e^y}{x-y}
 $$
 $$
 \le \frac{(x-y)^2}{324}(e^x+e^y-2e^{\frac{x+y}{2}}).
 $$

 Since $x$ and $y$ are arbitrary real numbers, putting $x=\log b, y=\log
 a$, we get

 $$
 0\le \frac{1}{3}(A+2G)-L\le\frac{(\log b-\log a)^2}{162}(A-G)
 $$
 $$
 =\frac{4}{162}(\frac{\log b-\log
 a}{b-a})^2(A^2-G^2)(A+G)=\frac{2}{81}(\frac{A-G}{L})^2(A+G),
 $$

 and the proof is done.

 For the second part, applying Theorem \ref{thm2} with $f=1/t, f''=2/t^3$, we
 get

 $$
 \frac{(b-a)^2}{24}\frac{1}{A^3}\le\frac{1}{4}(\frac{1}{a}+\frac{1}{b})+\frac{1}{2A}
 -\frac{1}{L}\le\frac{(b-a)^2}{48}(\frac{1}{a^3}+\frac{1}{b^3}).
 $$

 Now, the identities $1/a+1/b=2/H, (b-a)^2=4A(A-H), AH=G^2$ yields
 the proof.

 \end{proof}

 Finally, we shall give some interesting inequalities for the
 identric mean.

 \begin{theorem} \label{thm6} For arbitrary positive $a,b$ we have

 $$
 A^{2/3}G^{1/3}\le I\le
 A^{2/3}G^{1/3}\exp\bigl(\frac{(A-H)^2}{162H}(\frac{1}{A}+\frac{2}{H})\bigr);
 $$

 $$
 A^{4/3}(a,b)
 S^{2/3}(a,b)\exp\bigl(-\frac{4}{81}\frac{(A(a,b)-H(a,b))^2}{A(a,b)H(a,b)}\bigr)\le
 I(a^2,b^2)\le A^{4/3}(a,b)S^{2/3}(a,b).
 $$
 \end{theorem}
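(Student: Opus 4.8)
The plan is to read both inequalities as statements about the integral representation of the identric mean, $\log I(x,y)=\frac{1}{y-x}\int_x^y\log t\,dt$, and then to feed a well-chosen auxiliary function into Theorem \ref{thm4}. By Remark \ref{rem1} the convexity of $f$ itself is never actually needed, so the only hypothesis I must check in each case is the convexity or concavity of $f''$.

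For the first chain I would take $f(t)=\log t$ on $[a,b]\subset(0,\infty)$. Here $f''(t)=-1/t^2$ and $f^{(4)}(t)=-6/t^4<0$, so $f''$ is concave and the second part of Theorem \ref{thm4} applies. Since $f(a)+f(b)=\log(ab)=2\log G$ and $f(\frac{a+b}{2})=\log A$, we get $N(1/6,2/3)=\frac13\log G+\frac23\log A=\log(A^{2/3}G^{1/3})$, while $\frac{1}{b-a}\int_a^b\log t\,dt=\log I$. The lower estimate of Theorem \ref{thm4} then reads $\log I\ge\log(A^{2/3}G^{1/3})$, giving $I\ge A^{2/3}G^{1/3}$; the upper estimate gives $\log I-\log(A^{2/3}G^{1/3})\le\frac{(b-a)^2}{324}\bigl[2f''(\frac{a+b}{2})-f''(a)-f''(b)\bigr]$. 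Exponentiating and rewriting the bracket and $(b-a)^2$ through the means via $G^2=AH$ and $(b-a)^2=4A(A-H)$ produces the claimed exponential factor.

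For the second chain the key observation is that $f(t)=t\log t$ is the function that reproduces $I(a^2,b^2)$: a direct integration gives $\frac{1}{b-a}\int_a^b t\log t\,dt=\frac{A}{2}\log I(a^2,b^2)$, which I would verify by comparing $\int_a^b t\log t\,dt=\frac{b^2}{2}\log b-\frac{a^2}{2}\log a-\frac{b^2-a^2}{4}$ with the representation of $\log I(a^2,b^2)$. Here $f''(t)=1/t$ and $f^{(4)}(t)=2/t^3>0$, so both $f$ and $f''$ are convex and the first part of Theorem \ref{thm4} applies. Using $a\log a+b\log b=2A\log S$ one finds $N(1/6,2/3)=\frac{A}{3}\log(A^2S)$; the lower estimate then collapses to $\frac{A}{2}\log I(a^2,b^2)\le\frac{A}{3}\log(A^2S)$, i.e.\ $I(a^2,b^2)\le A^{4/3}S^{2/3}$, and the upper estimate, with $f''(a)+f''(b)-2f''(\frac{a+b}{2})=\frac1a+\frac1b-\frac2A=\frac{2(A-H)}{AH}$, yields the lower exponential bound.

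The only genuinely non-mechanical step is guessing the two auxiliary functions, and in particular recognizing that $t\log t$ (rather than $\log t$) is what introduces the square $I(a^2,b^2)$, together with the scaling constant $A/2$ that makes the coefficients $\frac43,\frac23$ come out. Once the functions are in hand, everything reduces to one explicit integration and the standard identities $G^2=AH$, $(b-a)^2=4A(A-H)$, $\frac1a+\frac1b=\frac2H$, $a\log a+b\log b=2A\log S$; I expect the bookkeeping in the first chain (simplifying $\frac1{a^2}+\frac1{b^2}-\frac2{A^2}$) to be the most error-prone part, but it is routine.
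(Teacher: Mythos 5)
You follow exactly the route of the paper's own proof: both chains are instances of Theorem \ref{thm4} (with Remark \ref{rem1} used to drop convexity of $f$ itself), applied to $f(t)=\log t$ for the first chain (the paper uses $f(t)=-\log t$ and the convex-$f''$ case, which is the same computation) and to $f(t)=t\log t$ for the second, via the representation $\frac{1}{b-a}\int_a^b t\log t\,dt=\frac{A}{2}\log I(a^2,b^2)$. Your second chain is correct and complete: with $a\log a+b\log b=2A\log S$, $\frac1a+\frac1b-\frac2A=\frac{2(A-H)}{AH}$, $(b-a)^2=4A(A-H)$, and division by $A/2$, the exponent $-\frac{4}{81}\frac{(A-H)^2}{AH}$ comes out exactly as stated, matching the paper.

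The gap is in the first chain, precisely at the step you deferred as routine: the bookkeeping does \emph{not} produce the claimed exponential factor. Carrying it out, $\frac1a+\frac1b=\frac2H$ and $ab=G^2=AH$ give
\[
\frac{1}{a^2}+\frac{1}{b^2}-\frac{2}{A^2}=\frac{4}{H^2}-\frac{2}{AH}-\frac{2}{A^2}=\frac{2(A-H)(2A+H)}{A^2H^2},
\]
hence
\[
\frac{(b-a)^2}{324}\Bigl[\frac{1}{a^2}+\frac{1}{b^2}-\frac{2}{A^2}\Bigr]=\frac{4A(A-H)}{324}\cdot\frac{2(A-H)(2A+H)}{A^2H^2}=\frac{2(A-H)^2}{81H}\Bigl(\frac{1}{A}+\frac{2}{H}\Bigr),
\]
which is four times the exponent $\frac{(A-H)^2}{162H}\bigl(\frac{1}{A}+\frac{2}{H}\bigr)$ asserted in the theorem. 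This cannot be repaired by estimating more carefully, because the printed bound is in fact false: for $(a,b)=(1,2)$ one has $\log I-\log(A^{2/3}G^{1/3})=\log 4-1-\frac23\log\frac32-\frac16\log 2\approx 4.6\cdot 10^{-4}$, while $\frac{(A-H)^2}{162H}\bigl(\frac{1}{A}+\frac{2}{H}\bigr)=\frac{13}{46656}\approx 2.8\cdot 10^{-4}$. The factor-of-four slip is in the paper's statement itself (its proof of this part is the single sentence ``applying Theorem \ref{thm4} with $f=-\log t$'', which hides this same computation); what your argument actually proves is
\[
I\le A^{2/3}G^{1/3}\exp\Bigl(\frac{2(A-H)^2}{81H}\Bigl(\frac{1}{A}+\frac{2}{H}\Bigr)\Bigr),
\]
and you should state that corrected exponent (or flag the discrepancy) rather than assert that the printed factor comes out.
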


 \begin{proof} Applying Theorem \ref{thm4} with $f=-\log t$, we
 obtain the proof.

 For the second part, for $f=t\log t$,
 we get

 $$
 \frac{1}{b-a}\int_a^b f(t)dt=\frac{1}{4}(\frac{b^2\log
 b^2-a^2\log a^2}{b-a}-(a+b))=\frac{a+b}{4}\log I(a^2,b^2).
 $$

 Since $f''=1/t$, Theorem \ref{thm4} yields

 $$
 \frac{1}{6}(a\log a+b\log b)+\frac{2}{3}A\log
 A-\frac{(b-a)^2}{324}(\frac{1}{a}+\frac{1}{b}-\frac{2}{A})
 $$
 $$
 \le \frac{a+b}{4}\log I(a^2,b^2)\le \frac{1}{6}(a\log a+b\log b)+\frac{2}{3}A\log
 A,
 $$

 and the proof follows by dividing the last expression with $a+b=2A$.
 \end{proof}

\end{document}